\newtheorem{prop}{Proposition}
\newtheorem{lemma}[prop]{Lemma}
\newtheorem{cor}[prop]{Corollary}
\newtheorem{thm}[prop]{Theorem}
\theoremstyle{definition}
\begin{document}
\title[Freiman models in Heisenberg groups]{A note on Freiman models in Heisenberg groups}

\author[N. Hegyv\'ari]{Norbert Hegyv\'ari}
\address{Norbert Hegyv\'{a}ri, ELTE TTK,
E\"otv\"os University, Institute of Mathematics, H-1117
P\'{a}zm\'{a}ny st. 1/c, Budapest, Hungary}
\email{hegyvari@elte.hu}

\author[F. Hennecart]{Fran\c cois Hennecart}
\address{Fran\c cois Hennecart,
PRES Universit\'e de Lyon,
Universit\'e Jean-Monnet,
LAMUSE,
23 rue Michelon,
42023 Saint-\'Etienne, France} \email{francois.hennecart@univ-st-etienne.fr}

\thanks{Research  is partially supported by OTKA grants K 67676, K 81658 and  Balaton Program Project}

\date{\today}

\begin{abstract}
Green and Ruzsa recently proved that for any $s\ge2$, any small squaring set $A$ in a (multiplicative) abelian group, i.e.  $|A\cdot A|<K|A|$, has a Freiman $s$-model: it means that there exists a group $G$ and a Freiman $s$-isomorphism from $A$ into $G$ such that $|G|<f(s,K)|A|$.

In an unpublished note, Green proved that such a result does not necessarily hold in non abelian groups if $s\ge64$.
The aim of this paper is improve Green's result by showing that it remains true under the weaker assumption $s\ge6$.

\end{abstract}

\maketitle

\section{\bf Introduction}

We will use the notation $|X|$ for the cardinality of any set or group $X$.
If $X$ and $Y$ are subsets of a given (multiplicative) group, the product $X\cdot Y$ or simply $XY$
denotes the set $\{xy\,\mid\,x\in X, y\in Y\}$. For $X=Y$ we write
$XY=X^2$. The set $X^{-1}$ is formed by all the inverse elements $x^{-1}$, $x\in X$.

Let
$s\ge2$ be an integer and $A\subset H$ and $B\subset G$ be subsets of  arbitrary (multiplicative) groups. A map $\pi:A\to B$ is said to be a Freiman
$s$-homomorphism
if for any
$2s$-tuple $(a_1,\dots,a_s,b_1,\dots,b_s)$ of elements of $A$ and any
signs $\epsilon_i=\pm1$, $i=1,\dots,s$, we have
$$
a_1^{\epsilon_1}\dots a_s^{\epsilon_s}=b_1^{\epsilon_1}\dots b_s^{\epsilon_s}
\Longrightarrow
\pi(a_1)^{\epsilon_1}\dots \pi(a_s)^{\epsilon_s}=
\pi(b_1)^{\epsilon_1}\dots \pi(b_s)^{\epsilon_s}.
$$
Observe that in the case of abelian groups, we may set, without
loss of generality, all the signs to $+1$.
If moreover $\pi$ is bijective and $\pi^{-1}$ is also a Freiman $s$-homomorphism, then $\pi$ is called a Freiman $s$-isomorphism from $A$
into $G$. In this case, $A$ and B are said to be Freiman $s$-isomorphic.

Green and Ruzsa proved in \cite{GR} that a structural result holds for
small squaring sets in an abelian (multiplicative) group.
The key argument in their proof is Proposition 1.2 of \cite{GR} asserting that any small squaring finite set $A$ in an abelian group has a good Freiman model, that is a relatively small finite group $G$ and a Freiman $s$-isomorphism from $A$ into $G$. More precisely, they showed the following effective result:

{\sl Let $s\ge2$ and $K>1$. There exists a constant $f(s,K)=(10sK)^{10K^2}$ such that $A$ is a subset of an abelian group $H$ satisfying the small squaring property $|A\cdot A|<K|A|$, then
 there exists an abelian group $G$  such that $|G|< f(s,K)|A|$ and
 $A$ is Freiman $s$-isomorphic to a subset of $G$.}

It is not difficult to see that this result cannot be literally extended to nonabelian groups by considering a set $A$  such that
$|A\cdot A|/|A|$ is small and $|A\cdot A\cdot A|/|A|$ is large (see \cite[page 94]{TV}  for such an example).
However it is known (by combining \cite[section 1.11]{Ru} and \cite[Proposition 2.40]{TV}) that if $|A\cdot A|/|A|\le K$ then for any $n$-tuple of signs  $\epsilon_1,\dots,\epsilon_n\in\{-1,1\}$, we have $|X^{\epsilon_1}\cdot X^{\epsilon_2}\cdots X^{\epsilon_n}|/|X|\le K^{O(n)}$ for some large subset $X$ of $A$ satisfying  $|X|\ge|A|/2$.
Despite this fact, the existenceness of a good Freiman $s$-model for some large subset of an arbitrary
set $A_0$ satisfying the small squaring property $|A_0\cdot A_0|<2|A_0|$ is not  guaranteed.
Indeed in his unpublished note \cite{G}, Green gave an example of such a set $A_0$ with arbitrarily large 
cardinality and  the following property:  let $s\ge64$ and $\delta=1/23$;  then for any $A\subset A_0$  
with $|A|\ge |A_0|^{1-\delta}$ and any finite  group $G$  
such that there is a Freiman $s$-isomorphism from $A$ into $G$,
 we have $|G|\ge|A|^{1+\delta}$. 
There is no doubt from his proof that the admissible range for $s$ could be somewhat improved ($s\ge32$ is seemingly the best range that can be read from his proof). 

Our aim is to improve Green's result by showing: 
\begin{thm}\label{th1}
Let $n$  be any positive integer and $\varepsilon$ be any positive real number. Then there exists a finite (nonabelian) group $H$ and a subset $A_0$ in $H$ with the following properties:
\begin{itemize}
\item[i)] $|A_0|>n$ and $|A_0\cdot A_0|<2|A_0|$;
\item[ii)]  For any $A\subset A_0$ with $|A|\ge |A_0|^{43/44}$ and for any finite group $G$ such that there exists a Freiman $6$-isomorphism from $A$ onto $G$, we have $|G|\ge |A|^{33/32-\varepsilon}$.
\end{itemize}
\end{thm}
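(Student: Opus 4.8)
Our strategy exploits the fact that in a Heisenberg group, signed products of length $6$ already detect commutators, while a set can be made to have doubling below $2$ simply by being (essentially) a coset of a large subgroup; the task is to sit that subgroup very sparsely inside the Heisenberg group and to twist the coset rigidly enough that no small $6$-model survives, even after passing to sparse subsets.

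\emph{Step 1 (a small $6$-model carries a large invariant).} Suppose $\pi\colon A\to B\subseteq G$ is a Freiman $6$-isomorphism. After normalising so that $e\in A$ (which costs only bounded factors and removes parity nuisances), the rule $a_1^{\epsilon_1}\cdots a_j^{\epsilon_j}\mapsto\pi(a_1)^{\epsilon_1}\cdots\pi(a_j)^{\epsilon_j}$ is well defined on signed words of length $\le 6$ (pad to length $6$ with $aa^{-1}$) and, since $\pi^{-1}$ is also a Freiman $6$-homomorphism, injective on the set $A^{\diamond3}$ of elements of $H$ represented by signed words of length $\le3$; hence $|G|\ge|A^{\diamond3}|$. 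More usefully, for $a_1,a_2,a_3,a_4\in A$ consider the four-letter word $w(a_1,a_2):=a_2a_1^{-1}a_2^{-1}a_1$. In our ambient group $w(a_1,a_2)$ will be central and will depend only on a single ``difference coordinate'' of $a_1,a_2$, so the relation $w(a_1,a_2)=w(a_3,a_4)$ has total length $8\le 2\cdot6$ and is therefore preserved in both directions by $\pi$. Thus $G$ contains at least as many elements $\pi(a_2)\pi(a_1)^{-1}\pi(a_2)^{-1}\pi(a_1)$ as there are distinct difference coordinates occurring in $A$; moreover these lie in $[\langle B\rangle,\langle B\rangle]$ while $B$ is a dense subset of a coset of a subgroup $K\le G$, and (using that length-$2$ commuting relations are preserved, so $K$ is abelian) one shows these elements are essentially independent of $B$ inside $\langle B\rangle$. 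Combining, $|G|$ is bounded below by roughly $|A|$ times the number of distinct difference coordinates in $A$.

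\emph{Step 2 (the construction).} Take $H=H(\mathbb{Z}/N)$ with $N$ of a convenient shape, $x=(1,0,0)$ and $H_1=\{(0,b,c)\}\cong(\mathbb{Z}/N)^2$; then $x$ normalises $H_1$ and acts by the shear $\sigma\colon(b,c)\mapsto(b,c+b)$, of order $N=|H_1|^{1/2}$. Let $A_0=xH_1$ (possibly with one further controlled element such as $x^2$). A coset computation gives $|A_0\cdot A_0|\le|H_1|+1<2|A_0|$ and $|A_0|>n$ once $N$ is large, so (i) holds. For $a_1=xh_1,a_2=xh_2$ one computes $w(a_1,a_2)=(\sigma-1)(h_2-h_1)$, which is central and equals $(0,0,\,b_2-b_1)$: so the invariant of Step 1 is the size of the difference set of the $b$-coordinates of $A$. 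Crucially $|A_0|\approx N^2$ is polynomially smaller than $|H|=N^3$, which is exactly what leaves room for the claimed bound $|G|\ge|A|^{33/32-\varepsilon}$.

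\emph{Step 3 (the heart: robustness under sparsification, the main obstacle).} One must show that for \emph{every} $A\subseteq A_0$ with $|A|\ge|A_0|^{43/44}$ the two lower bounds of Step 1 combine to $|G|\ge|A|^{33/32-\varepsilon}$. The mechanism is a dichotomy: either the $b$-coordinates of $A$ spread over many residues of $\mathbb{Z}/N$, in which case the commutator invariant alone is already large and, being independent of $B$, forces $|G|\gtrsim|A|\cdot(\text{spread})$; or the $b$-coordinates cluster on few residues, but then each fibre has at most $N$ elements, so $A$ being not-too-sparse forces enough residues anyway — and then one lower-bounds the difference set in $\mathbb{Z}/N$ by a Plünnecke/Cauchy–Schwarz count exploiting the arithmetic of $N$. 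Balancing the two sides of the dichotomy against the density threshold is precisely what produces the numbers $43/44$ and $33/32$. The obstacle, and where essentially all the work lies, is that for the full coset $A_0$ the product set $A_0^{\diamond3}$ is only linear in $|A_0|$, so the super-linear bound on $|G|$ must come entirely from the commutator invariant and must survive deleting all but an $|A_0|^{-1/44}$ fraction of the elements; ruling out an adversarial sparse $A$ that simultaneously kills the $b$-spread and keeps its $b$-projection small is the crux. The key quantitative gain over Green's argument ($s\ge32$) is that the commutator-detecting relation $w(a_1,a_2)=w(a_3,a_4)$ has total length only $8$, so Freiman $6$-isomorphisms already respect it, and this is what lets the admissible range of $s$ descend all the way to $6$.
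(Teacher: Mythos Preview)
Your proposal contains a genuine gap at the heart of Step~1, and the construction you choose makes that gap fatal. You assert that ``$|G|$ is bounded below by roughly $|A|$ times the number of distinct difference coordinates in $A$'', justified by the commutator images being ``essentially independent of $B$ inside $\langle B\rangle$''. This independence is neither stated precisely nor proved. Concretely, take $G=H$ and $\pi=\mathrm{id}$: the commutator images $(0,0,d)$ satisfy $(1,b,c)\cdot(0,0,d)=(1,b,c+d)\in A_0$, so the product of $\pi(A)$ with the commutator set collapses back into a set of size $|A_0|$, not $|A_0|$ times anything. Your remark that $B=\pi(A)$ lies in a coset of an abelian subgroup $K\le G$ is unfounded: nothing in the hypotheses forces $\pi(A)$ to sit in any coset, and elements of your $A_0$ do not commute with one another, so ``length-$2$ commuting relations'' give you no abelian $K$. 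Without this product bound, Step~3's dichotomy has nothing to balance, and no computation you sketch produces the specific exponents $43/44$ and $33/32$.

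The paper's proof is structurally different. It takes $A_0=\{[x,y,z]:0\le x<p^{\alpha}\}$ with $\alpha\approx 10/11$ (not a single coset), and the exponents $43/44$, $33/32$ arise from $|A_0|\approx p^{2+\alpha}$ and the eventual bound $|G|\ge p^3$. The latter is obtained not by a product-counting argument but by group theory: one shows (via Vinh's point--hyperplane incidence bound and an exponential-sum lemma handling an exceptional set) that $\pi$ carries a carefully chosen element to an element of order exactly $p$ in $G$; then one uses that a Freiman $5$-isomorphic image of a $2$-nilpotent set generates a $2$-nilpotent group (Lemma~\ref{lem2}), so $G$ is the direct product of its Sylow subgroups; finally the commutator relation (your $s=4$ observation) shows the $p$-Sylow $G_p$ is nonabelian, whence $|G_p|\ge p^3$. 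The reason $s=6$ suffices, rather than $s=7$ as in Theorem~\ref{thm3}, is that Vinh's theorem lets one place the needed auxiliary elements in $A^2A^{-2}A$ (five letters) instead of $A^2A^{-2}AA^{-1}$ (six letters), so the inductive step requires only six letters total. Your explanation via ``total length $8$'' of the commutator relation is not the operative mechanism.
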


Our proof in Section \ref{s4} is partially based on Green's approach but also includes new materials. It exploits arguments coming from group theory and Fourier analysis with additional tools, e.g. a recent incidence theorem due to Vinh \cite{LAV}. It also needs some additional combinatorial arguments.

In Section \ref{s3}, we include for comparison the proof of a weaker statement that does not use the new materials, but which optimizes, in some sense, Green's ideas.

Let $p$ be a prime number and $\mathbb{F}$ the fields with $p$ elements.
We denote by $H$ the Heisenberg linear group over $\mathbb{F}$ consisting of the
upper triangular matrices
$$
[x,y,z]=\begin{pmatrix}
1 & x & z\\
0 & 1 & y\\
0 & 0 & 1
\end{pmatrix},\quad x,y,z\in\mathbb{F}.
$$
We recall the product rule in $H$:
$$
[x,y,z]\cdot[x',y',z']=[x+x',y+y',xy'+z+z'].
$$
As shown in \cite{G}, this group provides an example of a nonabelian group in which there exists some subset $A_0$ with small {\it squaring} property, namely $|A_0^2|<2|A_0|$, and not having a good Freiman model. That is there is no {\it relatively big} isomorphic image of $A_0$ by a Freiman $s$-isomorphism with a given $s$ in any group $G$. We will also use the Heisenberg group in order to derive our results.

The proof of Theorem \ref{th1} goes in the following manner. We will show that:
firstly there exists a non trivial $p$-subgroup in the subgroup
generated by $\pi(A)$ in $G$; secondly any
element in $\pi^{-1}(G)$ is the product of at most $6$
elements from $A$ or $A^{-1}$. The rest of the proof is based on some group-theoretical properties which are mainly taken from \cite{G}.

As indicated in \cite{G}, there is no hope to obtain an optimal result by this approach, namely a similar result with $s_0=2$.

\section{\bf Some properties of finite nilpotent groups and of the Heisenberg group $H$}\label{s2}

For any group $G$, we  denote by $1_G$ the identity element of $G$. Thus
$[0,0,0]=1_H$.

We will use the following partially classical properties:
\begin{enumerate}

\item $H$ is a two-step nilpotent group (or nilpotent of class two).
Indeed, the commutator of $a_1=[x_1,y_1,z_1]\in H$ and
$a_2=[x_2,y_2,z_2]\in H$ denoted
by $[a_1;a_2]$ is equal to
$$
[a_1;a_2]=a_1a_2a_1^{-1}a_2^{-1}=[0,0,x_1y_2-x_2y_1].
$$
For any $a_3=[x_3,y_3,z_3]\in H$, we obtain 
$$
[[ a_1;a_2];a_3]=[0,0,0]=1_H,
$$
for the double commutator. Hence the result.

\item Any finite nilpotent group is the direct product of its
Sylow subgroups (see 6.4.14 of \cite{W}).

\item Any finite $p$-group of order $p$ or $p^2$ is abelian
(see 6.3.5 of \cite{W}).

\item Assume that $A\subset H$ and $\pi$ is a Freiman $s$-homomorphism from $A$ into $G$ with $s\ge 5$. We  denote by $\langle\pi(A)\rangle$ the subgroup
generated by $\pi(A)$.
Then $\langle \pi(A)\rangle$ is  a two-step nilpotent group. Indeed, for any $a,b,c\in A$, one has
$$
aba^{-1}b^{-1}c=caba^{-1}b^{-1}
$$
since $H$ is a nilpotent group of class $two$. Hence
$$
\pi(a)\pi(b)\pi(a)^{-1}\pi(b)^{-1}\pi(c)=\pi(c)\pi(a)\pi(b)\pi(a)^{-1}\pi(b)^{-1}
$$
since $\pi$ is a Freiman $s$-homomorphism with $s\ge5$. It thus follows that
double commutators satisfy $[[ a_1;b_1];c_1]=1_G$ for any $a_1,b_1,c_1\in \pi(A)$.
In \cite{G}, the author observed from a direct argument that it remains true for any $a_1,b_1,c_1\in\langle\pi(A)\rangle$: since
$\langle\pi(A)\rangle$ is finite, the result will follow from the next lemma (cf. \cite{G}).

\begin{lemma} \label{lem2}
Let $\Gamma$ be any group and $X$ a maximal subset of $\Gamma$
such that
\begin{equation}\label{new1}
[[ a;b];c]=1_{\Gamma},\quad\text{ for any $a,b,c\in X$.}
\end{equation}
Then $X$ in closed under multiplication.
\end{lemma}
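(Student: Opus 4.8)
The plan is to show that $X$ is closed under multiplication by taking $g,h\in X$ and verifying that $X\cup\{gh\}$ still satisfies the defining identity \eqref{new1}; maximality of $X$ then forces $gh\in X$. So I would fix $g,h\in X$ and write $k=gh$, and check that the triple-commutator vanishes for every combination of elements taken from $X\cup\{k\}$ in any of the three slots.

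The key algebraic input I would use is the standard commutator identity $[ab;c]=a[b;c]a^{-1}\cdot[a;c]$ (and its mirror form in the other variable), which expresses a commutator of a product as a conjugate of commutators of the factors. Applied with $ab=gh=k$, this turns $[k;c]$, $[a;k]$, etc. into words in the commutators $[g;c]$, $[h;c]$, $[a;g]$, $[a;h]$ and conjugates thereof. Since $g,h$ and all the other chosen elements lie in $X$, hypothesis \eqref{new1} says each of these "first-level" commutators is central in the sense that it commutes with every element of $X$: indeed $[[a;b];c]=1$ for $a,b,c\in X$ means $[a;b]$ commutes with every $c\in X$, and in particular with every product of elements of $X$. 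I would first record this remark — that $[a;b]$ centralizes the subgroup generated by $X$ whenever $a,b\in X$ — because it lets me discard all the conjugating factors that appear when expanding $[k;c]$ and friends. After that clean-up, $[k;c]$ collapses to a product of commutators of elements of $X$, hence is itself a product of things centralizing $\langle X\rangle$, so $[[k;c];d]=1$ for $d\in X$; and symmetrically $[[a;b];k]=1$. The remaining cases, where $k$ appears in two or three slots, follow the same way once one notes that a product of elements each centralizing $\langle X\rangle$ again centralizes $\langle X\rangle$, and $k$ itself normalizes the relevant central subgroup.

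Concretely I would organize the verification by the number of occurrences of $k$ among the three arguments: zero occurrences is the hypothesis; one occurrence splits into "$k$ in the innermost pair" versus "$k$ outermost", each handled by one application of the product-commutator identity together with the centralizing remark; two and three occurrences reduce to the one-occurrence cases by peeling off factors. The main obstacle, and the place to be careful, is bookkeeping in the non-abelian expansion: the identity $[ab;c]=a[b;c]a^{-1}[a;c]$ introduces a conjugation by $a$, and one must be sure that the element doing the conjugating (here $g$ or $h$, both in $X$) genuinely normalizes the subgroup generated by the first-level commutators — which it does, since conjugation by an element of $X$ sends $[a;b]$ to $[gag^{-1};gbg^{-1}]$... but that is not obviously back in the same set, so instead I would argue directly that $g[a;b]g^{-1}=[a;b]\cdot[g;[a;b]]^{-1}\cdot(\text{stuff})$ and use that $[a;b]$ already commutes with $g$ by \eqref{new1}, making the conjugation trivial. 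That observation — every first-level commutator of elements of $X$ is fixed by conjugation by elements of $X$, equivalently is genuinely central in $\langle X\rangle$ — is the crux, and once it is in hand the rest is a short and essentially mechanical check.
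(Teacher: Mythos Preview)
Your proposal is correct and follows essentially the same route as the paper. The paper also reduces everything to the observation that $[a;b]$ commutes with every element of $X$ whenever $a,b\in X$, and uses the product-commutator identity (written there in the equivalent form $[xy;z]=[x;[y;z]]\cdot[y;z]\cdot[x;z]$) to collapse $[ab;c]$ to $[b;c][a;c]$; it is terser than your outline, explicitly checking only the two nontrivial cases $[[ab;c];d]=1$ and $[ab;[ab;c]]=1$ rather than running through the full occurrence-count case split, but the substance is the same.
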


For the the sake of completeness we include the proof which is in the same way
as in \cite{G}.

\begin{proof}
By \eqref{new1} and the following
identity
\begin{equation}\label{new2}
[xy;z]=[x;[y;z]]\cdot[y;z]\cdot[x;z],\quad x,y,z\in\Gamma,
\end{equation}
we obtain
for any $a,b,c,d\in X$, $[[ab;c];d]=[[b;c]\cdot[a;c];d]$. Applying again
\eqref{new2} with $x=[b;c]$, $y=[a;c]$ and $z=c$, yields
in view of \eqref{new1},
\begin{equation}\label{new3}
[[ab;c];d]=1_{\Gamma},\quad \text{for any $a,b,c,d\in X$.}
\end{equation}
By a further application of \eqref{new2} with
$x=a$, $y=b$ and $z=[ab;c]$, we get by \eqref{new3}
$[ab;[ab;c]]=1_{\Gamma}$ for any $a,b,c\in X$. By the maximal property of $X$,
we obtain $ab\in X$ for any $a,b\in X$.
\end{proof}

\end{enumerate}

\section{\bf Approach of the proof with a slightly weaker result}
\label{s3}

Before proving our main result, we explain the principle of the approach by showing the following weaker result in which only Freiman $s$-isomorphisms with $s\ge7$ are considered.

\begin{thm}\label{thm3}
Let $n$ be a positive integer and $\theta$ be a real number such that
$$
\frac{11}{12}\leq\theta\leq1
$$
and let
$$
\varphi_{\theta}=\frac{12\theta-9}{2}.
$$
Then there exists a finite group $H$ and a subset $A_0$ in $H$ satisfying the following properties:
\begin{itemize}
\item[i)] $|A_0|>n$ and $|A_0\cdot A_0|<2|A_0|$;
\item[ii)]  For any $A\subset A_0$ with $|A|\ge |A_0|^{\theta}$ and for any finite group $G$ such that there exists a Freiman $7$-isomorphism from $A$ onto $G$, we have  $|G|\ge |A|^{\varphi_{\theta}}$.
\end{itemize}
\end{thm}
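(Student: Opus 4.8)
The plan is to construct the set $A_0$ explicitly inside the Heisenberg group $H$ over $\mathbb{F}_p$ for a suitable prime $p$, following Green's template. A natural choice is to take $A_0$ to consist of matrices $[x,y,z]$ with $x$ confined to a very short interval (of length $1$ or $2$, say $x\in\{0,1\}$), $y$ ranging over a long arithmetic-progression-like set, and $z$ ranging over a set designed so that the product set $A_0\cdot A_0$ barely grows: since $[x,y,z]\cdot[x',y',z']=[x+x',y+y',xy'+z+z']$, restricting $x$ to $\{0,1\}$ keeps the $xy'$ term under control, and one tunes the $y$- and $z$-ranges so that $|A_0^2|<2|A_0|$ while $|A_0|$ can be made as large as we like (bigger than $n$) and has a prescribed order of magnitude as a power of $p$. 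This settles property i).

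For property ii), suppose $A\subseteq A_0$ with $|A|\ge|A_0|^{\theta}$ and $\pi\colon A\to G$ is a Freiman $7$-isomorphism onto a finite group $G$. First I would invoke item (4) of Section \ref{s2}: since $7\ge5$, the group $\langle\pi(A)\rangle$ is two-step nilpotent, hence by item (2) it is the direct product of its Sylow subgroups. Next I would argue that $\langle\pi(A)\rangle$ must contain a nontrivial $p$-subgroup: this is where the structure of $A_0$ inside $H$ is used, via the commutator formula $[a_1;a_2]=[0,0,x_1y_2-x_2y_1]$, which shows that $A$ (if $|A|$ is large enough to force two elements with distinct $x$-coordinates and suitably spread $y$-coordinates) generates central commutators of order $p$; a Freiman $7$-isomorphism preserves the relevant length-$\le7$ relations, so $\langle\pi(A)\rangle$ inherits an element of order $p$. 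The crucial counting step is then to show that \emph{every} element of $G$ is a product of at most $7$ elements of $A\cup A^{-1}$ — equivalently $G=\pi(A)\cup$ (short products), so that $\pi^{-1}$ extends to all of $G$ — which forces $|G|$ to be comparable to the size of the $7$-fold product set of $A$ inside $H$; combined with the lower bound coming from the nontrivial $p$-part and the dimension count of how large such product sets must be (the $z$-coordinate spreads quadratically), one extracts $|G|\ge|A|^{\varphi_\theta}$ with $\varphi_\theta=(12\theta-9)/2$. The exact exponent $\varphi_\theta$ comes from balancing: $|A|\approx p^{\text{something}}$, $|G|\gtrsim p^{3}$ (full Heisenberg-type spread in three coordinates) against $|A_0|\approx p^{2}$, and chasing the constraint $|A|\ge|A_0|^\theta$ through these relations.

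The main obstacle I anticipate is the second bullet of the proof outline from the introduction: proving that every element of $G$ is a product of at most $7$ elements from $A$ or $A^{-1}$. This is a genuinely non-trivial claim about the ambient group $G$ — a priori $G$ could be much larger than $\langle\pi(A)\rangle$ or have a complicated structure — and it requires using the Freiman $7$-isomorphism property together with the specific multiplicative combinatorics of $A_0\subset H$ (that short products of $A_0$ already cover a controlled but genuinely three-dimensional chunk of $H$, so that $\pi$ cannot "hide" extra elements of $G$). Controlling the $z$-coordinate, which grows like a product $xy'$ and hence quadratically under iterated multiplication, is the technically delicate point: one must verify that all the identities witnessing "$g$ is a short product" already hold in $H$ among preimages, so that they transfer, and this is exactly why $s=7$ rather than $s=6$ suffices here — the extra slack in the number of allowed factors makes the transfer of these relations routine, whereas pushing to $s=6$ (as in Theorem \ref{th1}) needs the sharper incidence-geometric input from Vinh's theorem. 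Once the covering claim is in hand, the final inequality $|G|\ge|A|^{\varphi_\theta}$ is a matter of bookkeeping with the explicitly chosen parameters.
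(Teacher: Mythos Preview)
Your proposal misidentifies both the construction of $A_0$ and the central step of the argument.

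First, the set $A_0$ is not built with $x\in\{0,1\}$; it is $A_0=\{[x,y,z]:0\le x<p^{\alpha},\ y,z\in\mathbb{F}\}$ with $\alpha=8(1-\theta)/(4\theta-3)$ chosen as a function of $\theta$. This gives $|A_0|\approx p^{2+\alpha}$, not $p^2$, and the whole point of the parameter $\alpha$ is that $3/(2+\alpha)=(12\theta-9)/2=\varphi_\theta$ exactly, so that the eventual bound $|G|\ge p^3=|A_0|^{3/(2+\alpha)}\ge|A|^{\varphi_\theta}$ falls out. With your choice $x\in\{0,1\}$ the averaging step below would fail: for $\theta$ close to $11/12$ the ``$X$-slice'' of a subset $A$ of size $|A_0|^\theta$ need not contain two distinct $x$-values at all.

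Second, and more importantly, the paper does \emph{not} show that every element of $G$ is a product of at most $7$ elements of $A\cup A^{-1}$; that claim is neither needed nor proved. The actual mechanism is the following. By averaging one finds $X,Y,Z\subset\mathbb{F}$ with $[X,y_0,z_0]\cup[x_0,Y,z_0']\cup[u,v,Z]\subset A$ and $|X||Y||Z|^2\ge p^3$ (this is precisely where $\theta\ge11/12$ and the choice of $\alpha$ enter). A Fourier argument (Vinogradov's bound on $\sum e(hxy/p)$ plus Parseval) then shows that every central element $[0,0,t]$ lies in $A^2A^{-2}AA^{-1}$, i.e.\ is a product of six elements of $A\cup A^{-1}$. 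This lets one set $g_z:=\pi([0,0,z])$ and, using that $\pi$ is a Freiman $7$-isomorphism, verify the inductive identity $g_{(\lambda+1)(i-j)}=g_{\lambda(i-j)}g_{i-j}$ for a fixed pair $[u,v,i],[u,v,j]\in A$; hence $g_{i-j}\ne 1_G$ has order $p$. The lower bound $|G|\ge p^3$ then comes not from any covering of $G$, but purely from group theory: $\langle\pi(A)\rangle$ is two-step nilpotent (Property~4), so its Sylow $p$-subgroup $G_p$ is a direct factor, and if $G_p$ were abelian the commutator relation would force all $(x,y)$ with $[x,y,z]\in A$ onto a single line, contradicting $|A|>p^2$. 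Hence $G_p$ is non-abelian and $|G_p|\ge p^3$ by Property~3. Your proposed ``every $g\in G$ is a short product'' step is a misreading of the introduction and would not, on its own, yield the exponent $\varphi_\theta$.
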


For $\theta=13/14$, it yields the following corollary which can be compared to Theorem \ref{th1}:

\begin{cor}\label{cor4}
Let $n$ be any positive integer. Then  there exists a finite group $H$ and a subset $A_0$ in $H$ satisfying 
the following properties:
\begin{itemize}
\item[i)] $|A_0|>n$ and $|A_0\cdot A_0|<2|A_0|$;
\item[ii)]  For any $A\subset A_0$ with $|A|\ge |A_0|^{13/14}$ and for any finite group $G$ such that there exists a Freiman $7$-isomorphism from $A$ onto $G$, we have  $|G|\ge |A|^{15/14}$.
\end{itemize}
\end{cor}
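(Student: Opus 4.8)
The plan is to construct an explicit set $A_0$ inside the Heisenberg group $H$ over $\mathbb{F}_p$ (for a suitable large prime $p$) of the shape $A_0=\{[x,0,0]:x\in S\}\cup\{[0,y,0]:y\in T\}$ or a similar ``cross'' of two one-parameter subgroups, so that the product set stays small: since $[x,0,0][x',0,0]=[x+x',0,0]$ and $[0,y,0][0,y',0]=[0,y+y',0]$, while $[x,0,0][0,y,0]=[x,y,xy]$, one checks that $A_0^2$ consists of $S+S$ on the first axis, $T+T$ on the second axis, and a Cartesian-type piece $\{[x,y,xy]\}$; by choosing $S,T$ to be (generalized) arithmetic progressions one forces $|A_0^2|<2|A_0|$ exactly as in Green's construction. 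The size parameter is tuned so that $|A_0|>n$, establishing part i).

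For part ii), suppose $A\subset A_0$ with $|A|\ge|A_0|^{\theta}$ and $\pi:A\to G$ is a Freiman $7$-isomorphism onto a finite group $G$. First I would invoke property (4) of Section \ref{s2}: since $7\ge5$, the group $\langle\pi(A)\rangle$ is two-step nilpotent, and by Lemma \ref{lem2} combined with the fact that a Freiman $s$-homomorphism with $s\ge 6$ or $7$ lets us multiply enough elements of $A^{\pm1}$ together, one shows first that $G=\langle\pi(A)\rangle$ is a two-step nilpotent \emph{$p$-group} — the $p$-group property comes from the fact that $A_0$ lives inside a $p$-group and the relations $a^p=1_H$ (for elements on the axes) pull back through $\pi$, while property (2) splits off the Sylow subgroups and forces the others to be trivial. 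Next I would show, using that $A$ is a large (density $\ge |A|/|A_0|\ge|A_0|^{\theta-1}$) subset of the cross $A_0$, that every element of $H$ lying in the subgroup generated by $A$ is a product of at most $7$ elements of $A\cup A^{-1}$: this is where the arithmetic-progression / sumset covering estimates enter, giving $A+A\supset$ a long progression, hence $\langle A\rangle_H$ is covered by $7$-fold products. Then $\pi$ being a $7$-isomorphism transports this covering to $G$, so that $|G|=|\langle\pi(A)\rangle|$ is comparable to the number of such $7$-fold products, which is roughly $|A|^{3}$-ish divided by the ambient torsion — and a careful count using $|A|\ge|A_0|^\theta$ yields $|G|\ge|A|^{\varphi_\theta}$ with $\varphi_\theta=(12\theta-9)/2$.

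The main obstacle I anticipate is the bookkeeping in the second step: controlling precisely how large the generated subgroup $\langle A\rangle_H\subset H$ is and how it relates back to $|A|$ and $|A_0|$. One must balance three competing quantities — the sizes of the projections of $A$ onto the two axes, the size of the ``area'' set $\{xy\}$ appearing in the central coordinate, and the overall torsion $p$ — so that a subset of density $|A_0|^{\theta-1}$ still generates enough of $H$ to force $|G|$ large, while $A_0$ itself has small squaring. Getting the exponent exactly $(12\theta-9)/2$, and checking that the constraint $11/12\le\theta\le1$ is exactly what makes the argument close (e.g. that $\varphi_\theta\ge1$ precisely when $\theta\ge11/12$, so the conclusion $|G|\ge|A|^{\varphi_\theta}$ is genuinely stronger than the trivial $|G|\ge|A|$), requires the careful optimization of the parameters $p$ and $|S|,|T|$ in the construction.

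The corollary then follows by specializing $\theta=13/14$: one computes $\varphi_{13/14}=(12\cdot\frac{13}{14}-9)/2=(\frac{156}{14}-9)/2=(\frac{156-126}{14})/2=\frac{30}{28}=\frac{15}{14}$, which gives exactly the stated exponent $|G|\ge|A|^{15/14}$, and part i) is inherited verbatim from Theorem \ref{thm3}.
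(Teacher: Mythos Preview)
Your final paragraph is exactly right and is all the paper does for the Corollary itself: specialize Theorem~\ref{thm3} at $\theta=13/14$ and compute $\varphi_{13/14}=15/14$. The difficulties are all in your sketch of Theorem~\ref{thm3}, where there are two genuine gaps.

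\textbf{The construction.} Your ``cross'' $A_0=\{[x,0,0]:x\in S\}\cup\{[0,y,0]:y\in T\}$ does \emph{not} have small squaring: since $[x,0,0][0,y,0]=[x,y,xy]$ and $[0,y,0][x,0,0]=[x,y,0]$, the product set $A_0^2$ already contains at least $|S||T|$ elements, while $|A_0|\le|S|+|T|$. The paper (and Green) instead takes a ``slab''
\[
A_0=\{[x,y,z]:0\le x<p^{\alpha},\ y,z\in\mathbb{F}\},
\]
for which $|A_0\cdot A_0|=2|A_0|-p^2$ because the $y,z$ coordinates already run over all of $\mathbb{F}$ and only the short $x$-interval roughly doubles.

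\textbf{The order-$p$ element.} You cannot deduce $\pi(a)^p=1_G$ from $a^p=1_H$ via a Freiman $7$-homomorphism: that relation has length $p$, not $7$. The paper circumvents this by an inductive trick. By averaging one finds $X,Y,Z\subset\mathbb{F}$ with $[X,y_0,z_0]\cup[x_0,Y,z'_0]\cup[u,v,Z]\subset A$; a Fourier argument (Vinogradov plus Parseval) then shows that every $[0,0,t]$ lies in $A^2A^{-2}AA^{-1}$, provided $\theta\ge(8+3\alpha)/(8+4\alpha)$. One then fixes two elements $[u,v,i],[u,v,j]\in A$, sets $g_w=\pi([0,0,w])$, and proves by induction on $\lambda$ that $g_{\lambda(i-j)}=g_{i-j}^{\lambda}$, each step using only a length-$7$ identity. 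This gives an element of order $p$ in $G$. Finally, the two-step nilpotence of $\langle\pi(A)\rangle$ (Property~4), together with the observation that an abelian $p$-Sylow would force all commutators $[0,0,xy'-x'y]$ to map to $1_G$ and hence $|A|\le p^2$, shows $|G_p|\ge p^3$. The exponent $\varphi_\theta=(12\theta-9)/2$ then drops out of $|G|\ge p^3=|A_0|^{3/(2+\alpha)}\ge|A|^{3/((2+\alpha)\theta)}$ after solving $\theta=(8+3\alpha)/(8+4\alpha)$ for $\alpha$, not from a covering-by-$7$-fold-products count as you suggest.
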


Let $\alpha\in(0,1)$ and $A_0$ be the subset of $H$
\begin{equation}\label{new4}
A_0:=\{[x,y,z]\,\mid\, (x,y,z)\in [0,p^{\alpha})\times\mathbb{F}\times \mathbb{F}\}.
\end{equation}
For $p$ large enough, we plainly have
$$
|A_0\cdot A_0|=2|A_0|-p^2,
$$
thus $A_0$ is a small squaring subset of $H$.

Let $\theta$ be such that $0<\theta \le1$, on which an additional assumption
will be given later. Let $A$ be any subset of $A_0$ whose cardinality satisfies
\begin{equation}\label{new5}
|A|\ge |A_0|^{\theta}.
\end{equation}

By an averaging argument, there exists $x_0,y_0,z_0,z'_0,u,v\in\mathbb{F}$ and
$X,Y,Z\subset \mathbb{F}$ such that
\begin{align}
&[X,y_0,z_0]\cup[x_0,Y,z'_0]\cup[u,v,Z]\subset A\label{new6}
\\
&|X|\ge \frac{|A|}{p^{2}},\quad
|Y|\ge \frac{|A|}{p^{1+\alpha}},\quad
|Z|\ge \frac{|A|}{p^{1+\alpha}}.\label{new7}
\end{align}
Observe that $|X||Y||Z|^2 \ge p^3$ if
\begin{equation}\label{new8}
|A|\ge p^{(8+3\alpha)/4},
\end{equation}
which holds true if we
fix $\alpha$ such that
\begin{equation}\label{new81}
\theta=\frac{8+3\alpha}{8+4\alpha},
\end{equation}
that is
\begin{equation}\label{new9}
\alpha=\frac{8(1-\theta)}{4\theta-3},
\end{equation}
assuming that the following condition on $\theta$ holds:
$$
\theta\ge\frac{11}{12}.
$$

Let $a=[x,y_0,z_0]$, $b=[x_0,y,z'_0]$. These are elements of $A$.
Moreover the commutator of $a$ and $b$ is
$$
aba^{-1}b^{-1}=[0,0,xy-x_0y_0].
$$
Let $c=[u,v,z]$ and $d=[u,v,z']$ in $[u,v,Z]\subset A$. We thus have
$$
aba^{-1}b^{-1}cd^{-1}=[0,0,xy+z-z'-x_0y_0].
$$

For any element $t$ in $\mathbb{F}$, let $N(t)$ be the number of  representations of $t$ under the form
$$
t=xy+z-z'-x_0y_0,\quad x\in X,\quad y\in Y,\quad z,z'\in Z.
$$
One has 
$$
N(t)=\frac{1}{p}\sum_{h=0}^{p-1}
\sum_{\substack{x\in X\\y\in Y\\z,z'\in Z}}
e\left(\frac{h(xy-x_0y_0+z-z'-t)}{p}\right),
$$
where $e(\alpha)$ is the usual notation for $\exp(2i\pi \alpha)$.
We get
$$
N(t)\ge\frac{|X||Y||Z|^2}{p}-\frac{1}{p}\sum_{h=1}^{p-1}
|S(h)||T(h)|^2,
$$
where
$$
S(h)=\sum_{(x,y)\in X\times Y}e\left(\frac{hxy}{p}\right),
\quad
T(h)=\sum_{z\in Z}e\left(\frac{hz}{p}\right).
$$
By Vinogradov's inequality
$$
|S(h)|\le\sqrt{p|X||Y|}\quad (\text{if }p\nmid h)
$$
and Parseval's identity
$$
\frac{1}{p}\sum_{h=1}^{p}|T(h)|^2=|Z|,
$$
we deduce the lower bound
$$
N(t)>\frac{|X||Y||Z|^2}{p}-\sqrt{p|X||Y|}|Z|.
$$
Hence by \eqref{new9}, $N(t)$ is positive. We thus deduce
$$
[0,0,\mathbb{F}]\subset B:= A^2A^{-2}AA^{-1}.
$$

Let $G$ be any finite group and $\pi$ any Freiman $s$-isomorphism from $A$ into $G$.
Our goal is to show that $|G|$ is big compared to $|A|$. We thus may assume
that $G=\langle \pi(A)\rangle$.

We assume in the sequel that $s\ge7$. We start from the property that is proven just above:
$$
\pi([0,0,\mathbb{F}])\subset \pi(B).
$$

For any $z\in\mathbb{F}$, we let
$$
g_z=\pi([0,0,z]).
$$
If $h=\pi([u,v,w])\in \pi(A)$, then for $s\ge7$ we have
\begin{equation}\label{new10}
\pi([-u,-v,uv-w+z])=\pi([u,v,w]^{-1}[0,0,z])=h^{-1}g_z=g_z h^{-1}.
\end{equation}

We now show that for some $i\ne j$,
$$
g_{\lambda(i-j)}=g_{(\lambda-1)(i-j)}g_{i-j},\quad 0<\lambda\le p.
$$
Since $[u,v,Z]\subset A$ and $|Z|>1$ by \eqref{new7} and \eqref{new8},
$A$ contains at least two distinct elements $[u,v,i]$ and
$[u,v,j]$. We denote $h_k=\pi([u,v,k])$ for $k=i,j$.
Since $\pi$ is a Freiman $s$-isomorphism from $A$
into $G$ and $s\ge7$, we get $h_j^{-1}h_i=g_{i-j}$ and by a similar calculation
as in \eqref{new10}
$$
g_{(\lambda+1)(i-j)}h_i^{-1}=g_{\lambda(i-j)}h_j^{-1},
$$
hence
$$
g_{(\lambda+1)(i-j)}=g_{\lambda(i-j)+j}h_j^{-1}h_i=g_{\lambda(i-j)}g_{i-j}.
$$
We deduce by  induction
$$
g_{\lambda(i-j)}=g_{i-j}^{\lambda},\quad \text{for any $\lambda\ge1$.}
$$
Thus the order of $g_{i-j}$ in $G$ is either $0$ or $p$.
Since $s\ge2$, we have $h_i\ne h_j$ hence $g_{i-j}=h_j^{-1}h_i\ne1_G$. This
shows that $g_{i-j}$ is of order $p$ in $G$. We then deduce that
$p$ divides the order of $G$.

Let $G_p$ be the Sylow $p$-subgroup of $G$. Since $s\ge5$ and $H$ is a two-step nilpotent group, $G$ is also a two-step nilpotent group  by Property 4 of Section \ref{s2}. Then by Property 2 of Section \ref{s2}, $G$ can be written as the direct product $G=G_p\times K$. The projection $\sigma$ of $G$ onto $G_p$ is a homomorphism thus
$\tilde{\pi}=\sigma\circ\pi$ is a Freiman $s$-homomorphism. Since
for $z\ne0$, $h_z$ has order $p$ in $G$, $\sigma(h_z)$ has also order $p$ in $G_p$.

Let $a_1=[x_1,y_1,z_1]$ and $a_2=[x_2,y_2,z_2]$ be any
elements in $A$. We have
$a_1a_2a_1^{-1}a_2^{-1}=[0,0,x_1y_2-x_2y_1]$. If $G_p$ were
abelian  we
would obtain by using $s\ge4$
$$
1_G=\tilde{\pi}(a_1)\tilde{\pi}(a_2)\tilde{\pi}(a_1)^{-1}\tilde{\pi}(a_2)^{-1}=
\tilde{\pi}(a_1a_2a_1^{-1}a_2^{-1})=\tilde{\pi}([0,0,x_1y_2-x_2y_1])
=\sigma(g_{x_1y_2-x_2y_1}),
$$
hence $x_1y_2-x_2y_1=0$. We would conclude that $|A|\le p^2$, a
contradiction by the fact that $|A|\ge|A_0|^{\theta}\ge
p^{(2+\alpha)\theta}>p^2$ by \eqref{new81}.

Consequently by Property 3 given in Section \ref{s2}, $G_p$ is not abelian and $|G_p|\ge p^3$. Finally
$$
|G|\ge p^3=|A_0|^{3/(2+\alpha)}\ge|A|^{(12\theta-9)/2}.
$$
The proof of Theorem \ref{thm3} finishes by choosing the prime $p$ large enough in order to have $|A_0|>n$.

\section{\bf Proof of the main result Theorem \ref{th1}}
\label{s4}

Again, $A_0$ denotes the set
$$
A_0=\{[x,y,z]\,:\, 0\le x< p^{\alpha},\ y,z\in\mathbb{F}\},
$$
and $A$ any subset of $A_0$ such that $|A|\ge|A_0|^{\theta}$.
The parameters $\alpha\in(0,1)$ and $\theta\in(0,1)$ will be specified below.
Again, we have $|A_0|\ge p^{2+\alpha}$ thus
\begin{equation}\label{new11}
|A|\ge p^{(2+\alpha)\theta}.
\end{equation}

We recall that there exist $x_0,y_0,z_0,z'_0,u,v\in\mathbb{F}$
and $X,Y,Z\subset \mathbb{F}$ such that :
\begin{align}
&[X,y_0,z_0]\cup[x_0,Y,z'_0]\cup[u,v,Z]\subset A\nonumber\\
&|X|\ge \frac{|A|}{p^{2}},\quad
|Y|\ge \frac{|A|}{p^{1+\alpha}},\quad
|Z|\ge \frac{|A|}{p^{1+\alpha}}.\label{new12}
\end{align}
For $(x,y,z)\in X\times Y\times Z$, one has
$$
[x,y_0,z_0][x_0,y,z'_0][x,y_0,z_0]^{-1}[x_0,y,z'_0]^{-1}[u,v,z]=[u,v,xy+z-x_0y_0].
$$
Our first goal is to show that $[u,v,t]$ is in $A^2A^{-2}A$ except for $t$ belonging to a small subset $E$ of exceptions.

\noindent \textbf{First step:} For any $t$ in $\mathbb{F}$, let $r(t)$ be the number of triples
$(x,y,z)\in X\times Y\times Z$ such that
$$
t=xy+z-x_0y_0.
$$
One cannot prove that $r(t)>0$ for any $t$. Nevertheless, we will show that except for a small part of elements $t$, this property holds.
Let $C$ be the set of those elements of $t$ for which $r(t)>0$. Then by the Cauchy-Schwarz inequality
\begin{equation}\label{new13}
|C|\ge\frac{(|X||Y||Z|)^2}{\sum_t r(t)^2}.
\end{equation}
Furthermore $\sum_t r(t)^2$ coincides with the number of solutions of
$$
xy+z=x'y'+z',\quad x,x'\in X,\ y,y'\in Y,\ z,z'\in Z.
$$
If we fix $x=x_1$, $x'=x'_1$ and $z'=z'_1$, it gives the equation of an hyperplan $D_{x_1,x'_1,z'_1}$ in $\mathbb{F}^3$ :
$$
x_1y-x'_1y'+z-z'_1=0.
$$
All these hyperplanes are different and there are $|X|^2|Z|$ such hyperplanes.
The possible number of points $(y,y',z)\in Y\times Y\times Z$  is $|Y|^2|Z|$.

In \cite{LAV}, L.A. Vinh established a Szemeredi-Trotter type result
by obtaining an incidence inequality for points and hyperplanes in $\mathbb{F}^{d}$.
It is connected to the Expander Mixing Lemma (see Corollary 9.2.5 in \cite{AS}).
We have:

\begin{lemma}[L.A. Vinh \cite{LAV}]\label{p2}
Let $d\ge2$. Let $\mathcal{P}$ be a set of points in $\mathbb{F}^{d}$ and $\mathcal{H}$ be a set of hyperplanes
in $\mathbb{F}^{d}$. Then
$$
\left|\{(P,D)\in \mathcal{P}\times \mathcal{H}\ :\ P\in D\}\right|
\le \frac{|\mathcal{P}||\mathcal{H}|}{p}+(1+o(1))p^{(d-1)/2}(|\mathcal{P}||\mathcal{H}|)^{1/2}.
$$
\end{lemma}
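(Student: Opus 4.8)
The plan is to recognise the left-hand side as the number of edges, between two prescribed vertex subsets, of a highly structured biregular bipartite graph, and then to invoke the Expander Mixing Lemma.

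First I would introduce the full incidence graph. Let $V_1=\mathbb{F}^d$ be the set of all points and let $V_2$ be the set of all affine hyperplanes of $\mathbb{F}^d$, each written as $H_{a,b}=\{x:\langle a,x\rangle=b\}$ with $a\in\mathbb{F}^d\setminus\{0\}$, $b\in\mathbb{F}$, subject to $H_{a,b}=H_{\lambda a,\lambda b}$ for $\lambda\ne0$; thus $|V_1|=p^d$ and $|V_2|=p(p^d-1)/(p-1)$. Let $\mathcal{G}$ be the bipartite graph on $V_1\sqcup V_2$ joining $P\in V_1$ to $D\in V_2$ exactly when $P\in D$. A routine count gives that $\mathcal{G}$ is biregular: every point lies on $d_1:=(p^d-1)/(p-1)$ hyperplanes, every hyperplane contains $d_2:=p^{d-1}$ points, and the number of edges is $e(\mathcal{G})=|V_1|d_1=|V_2|d_2$.

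Next I would pin down the spectral gap. Let $M$ be the $V_1\times V_2$ bi-adjacency matrix of $\mathcal{G}$. Then $(MM^{\mathsf T})_{P,P'}$ is the number of hyperplanes through both $P$ and $P'$, which equals $d_1$ if $P=P'$ and $(p^{d-1}-1)/(p-1)$ if $P\ne P'$ (after translating $P$ to the origin this counts linear hyperplanes containing a fixed line). Hence
$$
MM^{\mathsf T}=\frac{p^{d-1}-1}{p-1}\,J+p^{d-1}\,I ,
$$
with $J$ the all-ones matrix and $I$ the identity on $\mathbb{F}^d$. Its eigenvalues are $d_1d_2=p^{d-1}(p^d-1)/(p-1)$ on the constant vector and $p^{d-1}$ on its orthogonal complement; equivalently, the largest singular value of $M$ is $\sqrt{d_1d_2}$, realised by the constant singular vectors, and every other singular value equals $p^{(d-1)/2}$.

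Finally I would apply the Expander Mixing Lemma in its bipartite form, which follows from the singular value decomposition of $M$ exactly as the version in Corollary 9.2.5 of \cite{AS}: writing $e(\mathcal{P},\mathcal{H})$ for the number of incident pairs and decomposing $\mathbf 1_{\mathcal P}$ and $\mathbf 1_{\mathcal H}$ into their components along the constant vectors and along $\mathbf 1^{\perp}$, the cross terms vanish because $M\mathbf 1_{V_2}=d_1\mathbf 1_{V_1}$ and $M^{\mathsf T}\mathbf 1_{V_1}=d_2\mathbf 1_{V_2}$, the constant–constant term equals $e(\mathcal{G})|\mathcal P||\mathcal H|/(|V_1||V_2|)=d_1|\mathcal P||\mathcal H|/|V_2|=|\mathcal P||\mathcal H|/p$, and the remaining term is bounded in absolute value by $p^{(d-1)/2}\,\|u\|\,\|w\|\le p^{(d-1)/2}\sqrt{|\mathcal P||\mathcal H|}$, where $u,w$ are the orthogonal components. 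This gives
$$
\bigl|\{(P,D)\in\mathcal P\times\mathcal H:\ P\in D\}\bigr|\le\frac{|\mathcal P||\mathcal H|}{p}+p^{(d-1)/2}\sqrt{|\mathcal P||\mathcal H|},
$$
which is even slightly stronger than the asserted inequality; the $(1+o(1))$ factor quoted from \cite{LAV} is harmless here. The only genuinely delicate points are the biregular bookkeeping inside the mixing estimate — isolating the Perron contribution so that it produces exactly the main term $|\mathcal P||\mathcal H|/p$ and controlling the rest by the second singular value — and the elementary incidence count showing that any two distinct points lie on exactly $(p^{d-1}-1)/(p-1)$ common hyperplanes; everything else is a direct computation.
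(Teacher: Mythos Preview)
Your argument is correct and is precisely the approach the paper alludes to: the paper does not supply its own proof of this lemma but simply quotes it from \cite{LAV} and remarks that it is ``connected to the Expander Mixing Lemma (see Corollary 9.2.5 in \cite{AS}).'' Your write-up via the bipartite incidence graph, the explicit computation of $MM^{\mathsf T}$, and the bipartite mixing lemma is exactly Vinh's proof, and your constants check out (in particular the identification of the second singular value as $p^{(d-1)/2}$ and of the main term as $|\mathcal P||\mathcal H|/p$).
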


By this result with $d=3$, we get for any large $p$
$$
\sum_t r(t)^2\le \frac{(|X||Y||Z|)^2}{p}+2p|X||Y||Z|,
$$
which yields by \eqref{new13}
$$
|C|\ge p-\frac{2p^3}{|X||Y||Z|}.
$$
Thus the set $E$ of exceptions $t\in\mathbb{F}$ with $r(t)=0$ has cardinality
\begin{equation}\label{new14}
|E|\le \frac{2p^3}{|X||Y||Z|}.
\end{equation}

\noindent \textbf{Second step:} We fix $z_1$ any element in $Z$ and let
$Z_1=Z\smallsetminus\{z_1\}$. For any $z\in Z_1$, we denote
$$
m(z)=\max\{m\le p \,:\, z_1 +j(z-z_1)\notin E,\ 2\le j\le m\}
$$
if the maximum exists and we let $m(z)=p$ otherwise.
Let
\begin{equation}\label{new15}
T=\left[\frac{|Z_1|}{2|E|}\right]
\end{equation}
If we denote by $Z'_1$ the set of the elements $z\in Z_1$ with $m(z)\le T$, then
$$
|Z'_1|=\sum_{m\le T}|\{z\in Z_1\,:\, m(z)=m\}|\le
\sum_{m\le T}|E|\le \frac{|Z_1|}{2},
$$
since $m=m(z)$ implies $z_1+(m+1)(z-z_1)\in E$. It follows that
$m(z) > T$ for at least one half of the elements $z$ in $Z_1$.
We denote by $\tilde{Z}_1$ the set of those elements $z$. We have
\begin{equation}\label{new16}
|\tilde{Z}_1|\ge \frac{|A|}{2p^{1+\alpha}}.
\end{equation}

\begin{lemma}
Assume that $23/24<\theta\le1$ and let $\gamma$ be a positive real number such that
\begin{equation}\label{new17}
\gamma<\frac{2(2+\alpha)\theta-(3+2\alpha)}{3}.
\end{equation}
If $|E|<p^{\gamma}$, then there exists an integer $t$ with $1\le t\le T$
and two distinct elements $z,z'\in \tilde{Z}_1$ such that
\begin{equation}\label{new18}
z'-z\notin E-E\quad\text{and}\quad z'=z_1+t(z-z_1)
\end{equation}
\end{lemma}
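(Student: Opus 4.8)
The plan is to recast the conclusion as a multiplicative counting statement inside $\mathbb{F}^{*}$ and then to count by Fourier analysis on the multiplicative group. We may assume $E\neq\emptyset$, as otherwise the claim is trivial. First I would set
\[
S:=\tilde Z_1-z_1\subseteq\mathbb{F}^{*},\qquad \beta:=(2+\alpha)\theta-1-\alpha ,
\]
and record, from \eqref{new11}, \eqref{new12} and \eqref{new16}, that $0\notin S$, $|S|=|\tilde Z_1|\ge\tfrac12 p^{\beta}$, $|Z_1|\ge p^{\beta}-1$, and $T<p$. A short computation rewrites the right-hand side of \eqref{new17} as $\gamma^{\star}:=\frac{2(2+\alpha)\theta-(3+2\alpha)}{3}=\frac{2\beta-1}{3}$, and the hypotheses $23/24<\theta\le1$, $\alpha<1$ force $\beta>\frac{22-\alpha}{24}>\frac78$; since $\gamma<\gamma^{\star}<\beta$ and $|E|<p^{\gamma}$ we also get $T\gg p^{\beta-\gamma}$ once $p$ is large. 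As $z\mapsto z_1+t(z-z_1)$ is a dilation centred at $z_1$, condition \eqref{new18} is equivalent to the existence of $w,w'\in S$ and an integer $t$ with $2\le t\le T$, $w'=tw$ and $(t-1)w\notin E-E$: one then takes $z:=w+z_1$ and $z':=w'+z_1$, both in $\tilde Z_1$, with $z'-z=(t-1)w$ and $z\ne z'$ because $T<p$ makes $t-1\not\equiv0\pmod p$.

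Next I would let $N_0$ be the number of triples $(w,w',t)$ with $w,w'\in S$, $2\le t\le T$, $w'=tw$, and $N_1$ the number of those additionally satisfying $(t-1)w\in E-E$; it then suffices to prove $N_0>N_1$ for $p$ large. For $N_1$ this is immediate: for fixed $t$ the map $w\mapsto(t-1)w$ is a bijection of $\mathbb{F}$, so at most $|E-E|\le|E|^{2}$ values of $w$ qualify (and $w'=tw$ is then forced), whence $N_1\le(T-1)|E|^{2}<(T-1)p^{2\gamma}$. For $N_0$ I would expand the indicator of the interval $\{2,\dots,T\}$ in the multiplicative characters of $\mathbb{F}^{*}$:
\[
N_0=\sum_{w,w'\in S}\mathbf{1}_{\{2,\dots,T\}}(w'w^{-1})=\frac{1}{p-1}\sum_{\chi}\Bigl(\sum_{s=2}^{T}\overline{\chi(s)}\Bigr)\Bigl|\sum_{w\in S}\chi(w)\Bigr|^{2}.
\]
The principal character contributes $\frac{(T-1)|S|^{2}}{p-1}$; for each non-principal $\chi$, P\'olya--Vinogradov gives $\bigl|\sum_{s=2}^{T}\overline{\chi(s)}\bigr|\ll\sqrt p\,\log p$, and Parseval's identity $\sum_{\chi}\bigl|\sum_{w\in S}\chi(w)\bigr|^{2}=(p-1)|S|$ then bounds the remaining contribution by $O(|S|\sqrt p\,\log p)$. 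Hence $N_0\ge\frac{(T-1)|S|^{2}}{p-1}-C|S|\sqrt p\,\log p$ for an absolute constant $C$.

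Finally I would substitute the sizes $|S|\gg p^{\beta}$, $T\gg p^{\beta-\gamma}$. After dividing through by $|S|$, the main term of $N_0$ beats its error term as soon as $\frac{(T-1)|S|}{p-1}\gg\sqrt p\,\log p$, i.e.\ as soon as $p^{\,2\beta-\gamma-3/2}/\log p\to\infty$; this exponent is positive because $\gamma<\gamma^{\star}=\frac{2\beta-1}{3}\le 2\beta-\tfrac32$, the last inequality being exactly $\beta\ge\tfrac78$. So $N_0\ge\tfrac12\cdot\frac{(T-1)|S|^{2}}{p-1}$ for $p$ large, and therefore
\[
N_0-N_1\ \ge\ (T-1)\Bigl(\frac{|S|^{2}}{2(p-1)}-p^{2\gamma}\Bigr),
\]
which is positive because $\frac{|S|^{2}}{2(p-1)}\gg p^{\,2\beta-1}\gg p^{2\gamma}$, using $\gamma<\gamma^{\star}=\frac{2\beta-1}{3}<\beta-\tfrac12$. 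A triple witnessing $N_0>N_1$ yields the desired $z,z',t$.

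The part I expect to be the real work is the character-sum lower bound for $N_0$ — namely, producing an honest count of the pairs of $S$ whose multiplicative ratio falls in the short interval $\{2,\dots,T\}$ — and it is also where the arithmetic is tight: both exponent inequalities above, $\gamma<2\beta-\tfrac32$ and $\gamma<\beta-\tfrac12$, degenerate precisely at $\theta=23/24$, $\alpha=1$, which is exactly why the threshold $23/24$ is imposed on $\theta$. The remaining points — carrying the ``$p$ large'' quantifier through \eqref{new15}--\eqref{new16} and the elementary manipulations with $[\,\cdot\,]$ — are routine.
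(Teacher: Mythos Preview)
Your argument is correct and reaches the same conclusion as the paper, but the technical route differs. The paper expands the count $\sum_t s(t)$ in \emph{additive} characters modulo $p$, detecting the relation $w^{-1}w'=t$ via $\frac1p\sum_h e\bigl(h(w^{-1}w'-t)/p\bigr)$; the hard piece is then the bilinear sum $\sum_{w,w'} e(hw^{-1}w'/p)$, bounded by Vinogradov's inequality as $\le\sqrt{p}\,|\tilde Z_1|$, while the sum over $t$ is handled by the geometric-series estimate $\bigl|\sum_t e(-ht/p)\bigr|\le p/(2|h|)$. The $E-E$ constraint is kept inside the sum, producing an extra error term $|E-E|\,|\tilde Z_1|\ln p$, and it is precisely this cross term that forces the tightest of the three conditions in \eqref{new19}, namely $\gamma<(2\beta-1)/3$. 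You instead expand in \emph{multiplicative} characters, so that the hard piece becomes the character sum over the interval $\{2,\dots,T\}$, handled by P\'olya--Vinogradov, and the sum over $S$ is dispatched by Parseval; by pulling the $E-E$ constraint out as a separate count $N_1\le (T-1)|E|^2$ you avoid the cross term entirely. The two constraints you arrive at, $\gamma<2\beta-\tfrac32$ and $\gamma<\beta-\tfrac12$, are each implied by the lemma's hypothesis $\gamma<(2\beta-1)/3$ once $\beta\ge\tfrac78$, so the argument goes through with room to spare.

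One small inaccuracy in your closing commentary: at the boundary $\theta=23/24$, $\alpha=1$ (so $\beta=7/8$) only the first inequality $\gamma<2\beta-\tfrac32$ becomes an equality with $\gamma^\star$; the second still has slack, since $\beta-\tfrac12=\tfrac38>\tfrac14=\gamma^\star$. This does not affect the validity of the proof.
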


\begin{proof}
For $1\le t\le T$, we denote by $s(t)$ the number of pairs $z,z'$ of elements of $\tilde{Z}_1$ with the required property. It is sufficient to show that
$$
\sum_{t=1}^Ts(t)>0.
$$
This sum can be rewritten as
$$
\sum_{t=1}^T\frac1p\sum_{0\le |h|\le p/2}\sum_{\substack{z,z'\in -z_1+\tilde{Z}_1\\z'-z\notin E-E}}
e\left(\frac{h(z^{-1}z'-t)}{p}\right).
$$
The contribution related to $h=0$ is plainly bigger than
$$
\frac{T}{p}(|\tilde{Z}_1|^2-|\tilde{Z}_1||E-E|),
$$
thus
$$
\sum_{t=1}^Ts(t)\ge \frac{T}{p}(|\tilde{Z}_1|^2-|\tilde{Z}_1||E-E|)
-\frac{1}{p}\sum_{0<|h|< p/2}\Big|\sum_{t=1}^Te\left(\frac{-th}{p}\right)\Big|
\Big|\sum_{\substack{z,z'\in -z_1+\tilde{Z}_1\\z'-z\notin E-E}}
e\left(\frac{hz^{-1}z'}{p}\right)\Big|.
$$
By extending the summation over $z$ and $z'$, we obtain for any  $h\ne0$
$$
\Big|\sum_{\substack{z,z'\in -z_1+\tilde{Z}_1\\z'-z\notin E-E}}
e\left(\frac{hz^{-1}z'}{p}\right)\Big|\le
\Big|\sum_{z,z'\in -z_1+\tilde{Z}_1}e\left(\frac{hz^{-1}z'}{p}\right)\Big|
+|\tilde{Z}_1||E-E|,
$$
which is less than or equals to
$$
(\sqrt{p}+|E-E|)|\tilde{Z}_1|
$$
by using Vinogradov's inequality for the estimation of the sum over $z$ and $z'$. Hence by the bounds
$$
\Big|\sum_{t=1}^Te\left(\frac{-ht}{p}\right)\Big|\le\frac{p}{2|h|}
\quad \text{for} \; 0<|h|< p/2,
$$
and
$$
\sum_{h=1}^{(p-1)/2}\frac{1}{h}\le \ln p,
$$
we get
$$
\sum_{t=1}^Ts(t)\ge \frac{T}{p}(|\tilde{Z}_1|^2-|\tilde{Z}_1||E-E|)
-(\sqrt{p}+|E-E|)|\tilde{Z}_1|\ln p.
$$
From the trivial bound $|E-E|\le |E|^2$ and by \eqref{new15} and \eqref{new16}, this sum is positive whenever
$|E| \le p^{\gamma}$
for $p$ is large enough, where $\gamma$ is any positive number such that
\begin{equation}\label{new19}
\gamma<\min\left(\frac{(2+\alpha)\theta-(1+\alpha)}{2};
\frac{4(2+\alpha)\theta-(7+4\alpha)}{2};
\frac{2(2+\alpha)\theta-(3+2\alpha)}{3}
\right).
\end{equation}
The second argument in this minimum is less than or equal to the first since $\theta\le 1$ and the third
is less than the second since $\theta > 23/24$.
Thus condition \eqref{new19} reduces to \eqref{new17}, and the lemma follows.
\end{proof}

By \eqref{new12} and \eqref{new14}, we deduce from the lemma that the condition
$$
7+2\alpha-3(2+\alpha)\theta< \frac{2(2+\alpha)\theta-(3+2\alpha)}{3},
$$
is sufficient in order to ensure that system
\eqref{new18} has at least one solution, assuming $p$ is large enough. This condition reduces to
$$
\theta>\frac{24+8\alpha}{22+11\alpha}
$$
or equivalently
\begin{equation}\label{new20}
\alpha>\alpha_0(\theta):=\frac{24-22\theta}{11\theta-8}.
\end{equation}
Since $\alpha<1$, we must choose $\theta$ such that
$\theta>\frac{32}{33}.$
Fixing
\begin{equation}\label{new21}
\alpha=\alpha_0(\theta)+\varepsilon,
\end{equation}
this yields
\begin{equation}\label{new22}
p^3\ge|A|^{3/(2+\alpha)}\ge |A|^{3(11\theta-8)/8-\varepsilon},
\end{equation}
for any $p\ge p_0(\epsilon)$.
For $\theta=43/44$, it will give the desired exponents in Theorem \ref{th1}.

\noindent\textbf{Third step:}
We have at our disposal $z_1,z\in Z$ and $t\in\mathbb{F}$ such that
\begin{equation}\label{new23}
z_1+j(z-z_1)\notin E, \quad j=2,\dots,t, \quad\text{and}\quad z_1+t(z-z_1)\in Z.
\end{equation}

Let $\pi:A\to G$, where $G$ is a finite group, be a Freiman $6$-isomorphism. As in the proof of Theorem \ref{thm3},
we will show that $p$ divides $|G|$ and that the $p$-Sylow subgroup of $G$ cannot be abelian. It will ensure the bound $|G|\ge p^3$ and the theorem will follow by \eqref{new22}.

Let
\begin{equation}\label{new24}
h=\pi([0,0,z-z_1])=\pi([u,v,z_1])^{-1}\pi([u,v,z]).
\end{equation}
Let us show that
for any $j$ such that $j(z-z_1)+z_1\notin E$, we have $\pi([0,0,j(z-z_1)])=h^j$.

If $1\le j\le t$, we proceed by induction: for $j=1$, the property is plainly true. Let $2\le j\le t$. We have
$$
\pi([u,v,j(z-z_1)+z_1][u,v,z]^{-1})=\pi([u,v,(j-1)(z-z_1)+z_1][u,v,z_1]^{-1}).
$$
By \eqref{new23} and by definition of $E$, both elements
$[u,v,(j-1)(z-z_1)+z_1]$ and $[u,v,j(z-z_1)+z_1]$ belong to $A^2A^{-2}A$.
Moreover $[u,v,z],[u,v,z_1]\in A$ hence,
by the fact that $\pi$ is a Freiman $6$-homomorphism, we get
$$
\pi([u,v,j(z-z_1)+z_1])\pi([u,v,z])^{-1}=\pi([u,v,(j-1)(z-z_1)+z_1])\pi([u,v,z_1])^{-1}.
$$
Thus, by \eqref{new24}
$$
\pi([u,v,j(z-z_1)+z_1])=\pi([u,v,(j-1)(z-z_1)+z_1])h.
$$
By multiplying on the left by $\pi([u,v,z_1])^{-1}$ and using again that
$\pi$ is a Freiman $6$-homomorphism, we get
$$
\pi([0,0,j(z-z_1)])=\pi([0,0,(j-1)(z-z_1)])h=h^{j}
$$
by the induction hypothesis.

For larger $j$, we again induct: let $j>t$ be such that
$j(z-z_1)+z_1\notin E$. Then at least one of the two elements $(j-1)(z-z_1)+z_1$ or
$(j-t)(z-z_1)+z_1$ is not in $E$ since $z'-z\notin E-E$.

If $(j-1)(z-z_1)+z_1\notin E$ we argue by induction as above. If
$(j-t)(z-z_1)+z_1\notin E$ we slightly modify the argument: since
$$
\pi([u,v,j(z-z_1)+z_1][u,v,t(z-z_1)+z_1]^{-1})=
\pi([u,v,(j-t)(z-z_1)+z_1][u,v,z_1]^{-1})
$$
and $\pi$ a Freiman $6$-isomorphism, we get
\begin{align*}
\pi([u,v,j(z-z_1)+z_1])&=\pi([u,v,(j-t)(z-z_1)+z_1])\pi([u,v,z_1])^{-1}
\pi([u,v,t(z-z_1)+z_1])\\&=
\pi([u,v,(j-t)(z-z_1)+z_1])h^t,
\end{align*}
and finally  by induction
$$
\pi([0,0,j(z-z_1)])=\pi([u,v,z_1])^{-1}\pi([u,v,(j-t)(z-z_1)+z_1])h^t=h^{j-t}h^t=h^j.
$$
Since $z_1\notin E$, we obtain $h^p=1$ in $G$, thus either $h=1$ or $h$ has order $p$. But $z\ne z_1$ hence $[0,0,z-z_1]=[u,v,z][u,v,z_1]^{-1}\ne 1_H$, hence $h\ne 1_G$ since $\pi$ is a Freiman $6$-isomorphism. We deduce that $G$ admits an element of order $p$, thus the $p$-Sylow subgroup $G_p$ of $G$ is not trivial. By considering the
canonical homomorphism $\sigma:G\to G_p$, $\tilde{\pi}=\sigma\circ\pi$ is
a Freiman $6$-homomorphim of $A$ onto $G_p$. Hence for any $a=[x,y,z]$ and
$b=[x',y',z']$ in $A$
$$
[ \tilde{\pi}(a);\tilde{\pi}(b)]= \tilde{\pi}([ a;b])=\tilde{\pi}([0,0,xy'-x'y])
$$
which must be equal to $1_G$ if $G_p$ is assumed to be abelian. It would mean
that $(x,y)$ belongs to a single line for any  $[x,y,z]\in A$, giving
$|A|\le p^2$ a contradiction to
$$
\frac{\ln|A|}{\ln p}\ge \theta(2+\alpha)>\theta(2+\alpha_0(\theta))=\frac{8\theta}{11\theta-8}>2,
$$
obtained by \eqref{new11}, \eqref{new20} and \eqref{new21}.

\end{document}